


\documentclass[draft]{amsart}


\setlength{\textheight}{43pc}
\setlength{\textwidth}{28pc}


%
%








\usepackage{mathtools}
\usepackage{amssymb}
\usepackage{hyperref}
\usepackage{imakeidx}
\usepackage{tikz}
\usepackage{manfnt}
\usetikzlibrary{%
  matrix,%
  calc,%
  arrows%
}

\newcommand{\C}{\mathbb{C}}

\newcommand{\N}{\mathbb{N}}

\newcommand{\Q}{\mathbb{Q}}
\newcommand{\R}{\mathbb{R}}

\newcommand{\Z}{\mathbb{Z}}
\newcommand{\comment}[1]{}

\makeatletter
\def\imod#1{\allowbreak\mkern10mu\left({\operator@font mod}\,\,#1\right)}
\makeatother


%
%

\newtheorem{thm}{Theorem}[section]
\newtheorem{cor}[thm]{Corollary}
\newtheorem{prop}[thm]{Proposition}
\newtheorem{lem}[thm]{Lemma}

\theoremstyle{remark}
\newtheorem{rem}[thm]{Remark}

\theoremstyle{definition}

\numberwithin{equation}{section}
\numberwithin{thm}{section}

\newtheorem{thm*}{Theorem}
\newtheorem{lem*}{Lemma}
\newtheorem{prop*}[thm]{Proposition}
\newtheorem{conj*}{Conjecture}

\newtheorem{Claim*}{Claim}
\newtheorem{defn}[thm]{Definition}
\newtheorem{defn*}{Definition}
\newtheorem{tble}[thm]{Table}

%
%
%




\begin{document}



\subjclass[2010]{Primary 13H10. Secondary 14C20, 14M25 \\
Keywords:  symbolic powers, divisor class group, rational singularity, toric variety.}



\title{Rational singularities and Uniform Symbolic Topologies}


%

\author{Robert M. Walker}

\address{Department of Mathematics, University of Michigan, Ann Arbor, MI, 48109}
\email{robmarsw@umich.edu}


%

\thanks{I wish to thank my thesis adviser, Karen E. Smith, for several hours of fruitful, encouraging conversation. I also thank Daniel Hern\'{a}ndez, Felipe P\'{e}rez, and Luis N\'{u}\~{n}ez-Betancourt for each critiquing a draft of the paper. I thank the anonymous referee for suggestions improving the exposition. I was supported by a NSF GRF under Grant Number PGF-031543, and this work was partially supported by the NSF RTG grant 0943832.}

%

\begin{abstract}
Take $(R, \mathfrak{m})$ any normal Noetherian domain, either local or $\N$-graded over a field. We study the question of when $R$ satisfies the uniform symbolic topology property (USTP) of Huneke, Katz, and Validashti:
 namely, that there exists an integer $D>0$ such that for all prime ideals $P \subseteq R$, the symbolic power $P^{(Da)} \subseteq P^a$ for all $a >0$. Reinterpreting results of Lipman, we deduce that when $R$ is a two-dimensional rational singularity, then it satisfies the USTP. 
Emphasizing the non-regular setting, we produce explicit, effective multipliers $D$, working in two classes of surface singularities in equal characteristic over an algebraically closed field, using: (1) the volume of a parallelogram in $\R^2$ when $R$ is the coordinate ring of a simplicial toric surface; or (2) known invariants of du Val isolated singularities in characteristic zero due to Lipman.
\end{abstract}

\maketitle



\section{Introduction and Conventions for the Paper}

Given a Noetherian commutative ring $R$, an important open problem is discerning when there is an integer $D = D(R) > 0$, depending on $R$, such that the symbolic power $P^{(D a)} \subseteq P^a$ for all prime ideals $P \subseteq R$ and all integers $a > 0$. 
When such a $D$ exists, 
one says that $R$ satisfies the \textbf{uniform symbolic topology property (USTP)} on prime ideals \cite{HKV2}. 
There are several simple cases where $D = 1$ will suffice, that is, where symbolic powers and ordinary powers coincide for all prime ideals. Namely $D=1$ works if $R$ is: (a)  Artinian; or (b) a one-dimensional domain; or (c) a two-dimensional UFD ($\operatorname{Cl}(R) =0$ in Lemma \eqref{thm: du Val bound 1} below). In particular, $D = 1$ works for any regular local ring of dimension at most two. 

 The Ein-Lazarsfeld-Smith Theorem  \cite{ELS, HH1}, as extended by Hochster and Huneke, says that if $R$ is a $d$-dimensional regular ring containing a field, then $P^{(da)} \subseteq P^a$ for all prime ideals $P \subseteq R$ and all integers $a > 0$; 
the papers \cite{HH2,MRJ1,TaYo1}  
extend what is known for regular rings containing a field. 
In stark contrast, effective uniform bounds are harder to unearth in the non-regular setting. 
In this direction, Huneke, Katz, and Validashti \cite[Cor~3.10]{HKV} give a non-constructive proof, under mild hypotheses on the ground field, that a uniform $D$ exists when $R$ is a reduced isolated singularity containing a field. 
As far as the author knows, no constructive (or even sharp) bounds $D$ were known prior to cases covered in this paper.

The following is a simple main case of the key lemma we use to produce effective bounds:

\begin{lem}\label{thm: du Val bound 1}
Let $R$ be a Noetherian normal domain whose global divisor class group $\operatorname{Cl}(R) : = \operatorname{Cl(Spec}(R))$ is finite abelian of order $D$. Then for all ideals $\mathfrak{q} \subseteq R$ of pure height one, the symbolic power $\mathfrak{q}^{(Da)} \subseteq \mathfrak{q}^a$ for all $a> 0$. 
\end{lem}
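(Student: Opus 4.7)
The plan is to exploit the standard dictionary between divisorial (reflexive) ideals in a normal Noetherian domain $R$ and effective Weil divisors on $\operatorname{Spec}(R)$. Since $\mathfrak{q}$ has pure height one, any primary decomposition of $\mathfrak{q}$ uses only height one primes $P_1,\ldots,P_r$, and localizing at the DVR $R_{P_i}$ yields $\mathfrak{q} R_{P_i} = P_i^{n_i} R_{P_i}$ for some integers $n_i \ge 1$. Intersecting primary components gives the reflexive presentation $\mathfrak{q} = \bigcap_i P_i^{(n_i)}$, identifying $\mathfrak{q}$ with the effective Weil divisor $D_{\mathfrak{q}} := \sum_i n_i P_i$. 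The same local computation shows $\mathfrak{q}^{(n)} = \bigcap_i \bigl(\mathfrak{q}^n R_{P_i} \cap R\bigr) = \bigcap_i P_i^{(n n_i)}$, so $\mathfrak{q}^{(n)}$ is the divisorial ideal attached to $n \cdot D_{\mathfrak{q}}$.

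Next, I would invoke $|\operatorname{Cl}(R)| = D$, which forces $D \cdot [D_{\mathfrak{q}}] = 0$, so $D \cdot D_{\mathfrak{q}} = \operatorname{div}(g)$ for some $g \in \operatorname{Frac}(R)$; effectivity of the divisor together with normality of $R$ implies $g \in R$. Matching divisors then yields $\mathfrak{q}^{(Da)} = (g^a)$ as ideals of $R$, for every $a > 0$. To finish, observe that $D n_i \ge n_i$ for all $i$, so $\mathfrak{q}^{(D)} = \bigcap_i P_i^{(D n_i)} \subseteq \bigcap_i P_i^{(n_i)} = \mathfrak{q}$. Thus $g \in \mathfrak{q}$, hence $g^a \in \mathfrak{q}^a$, giving $\mathfrak{q}^{(Da)} = (g^a) \subseteq \mathfrak{q}^a$ as required.

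The main delicate step is the divisor-theoretic identification of $\mathfrak{q}$ and its symbolic powers, which rests on the automatic reflexivity of pure height one ideals in a normal Noetherian domain. With that dictionary in place, the finite class group hypothesis forces the $D$-th symbolic power to be principal with generator already living in $\mathfrak{q}$, and the desired containment follows at once.
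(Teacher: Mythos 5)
Your proposal is correct and follows essentially the same route as the paper: decompose $\mathfrak{q} = \bigcap_i P_i^{(n_i)}$ and $\mathfrak{q}^{(n)} = \bigcap_i P_i^{(nn_i)}$ by localizing at the DVRs $R_{P_i}$, use the order-$D$ class group to make $\mathfrak{q}^{(D)}$ principal, and combine $\mathfrak{q}^{(Da)} = (\mathfrak{q}^{(D)})^a$ with $\mathfrak{q}^{(D)} \subseteq \mathfrak{q}$. The paper phrases the key step via the relation defining $\operatorname{Cl}(R)$ rather than via $\operatorname{div}(g)$, but this is only a cosmetic difference.
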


\noindent 
Lemma \ref{thm: du Val bound 1} supports Huneke's philosophy in \cite{hun1} that there are uniform bounds lurking in Noetherian rings. In particular, when $R$ in Lemma \ref{thm: du Val bound 1} has Krull dimension two, it satisfies the uniform symbolic topology property on prime ideals, since height one primes are the only nontrivial class to check.
Notice that the $R$ in Lemma \ref{thm: du Val bound 1} is allowed to be of \textbf{mixed characteristic}, unlike all previous results which require the ring to contain a field. 

It remains to identify classes of rings that satisfy the hypotheses of Lemma \ref{thm: du Val bound 1}. In dimension two, we use the following result due to Lipman:
\begin{thm*}[Lipman {\cite[Prop~17.1]{Lip0}}]\label{thm: rational singularity 1}
\textit{Let $(R, \mathfrak{m})$ be a  normal Noetherian local domain of dimension two.  If $R$ has a rational singularity, then $\operatorname{Cl}(R)$ is finite.} 
\end{thm*}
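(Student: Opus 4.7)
My plan is to pass to a resolution of singularities and reduce finiteness of $\operatorname{Cl}(R)$ to a question about lattice cokernels. Let $\pi: X \to \operatorname{Spec}(R)$ be a resolution, available in this generality by Lipman's own work on desingularization of excellent two-dimensional schemes, and write $E = \bigcup_{i=1}^n E_i$ for the exceptional fiber, decomposed into prime components. Since $X$ is regular and $\pi$ restricts to an isomorphism over $\operatorname{Spec}(R) \setminus \{\mathfrak{m}\}$, the standard localization sequence for Weil divisor classes gives
$$\bigoplus_{i=1}^{n} \Z \cdot [E_i] \xrightarrow{\iota} \operatorname{Pic}(X) \to \operatorname{Cl}(R) \to 0,$$
with $\iota([E_i]) = [\mathcal{O}_X(E_i)]$. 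It therefore suffices to show that the cokernel of $\iota$ is finite.

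The next step is to feed in the intersection pairing $\phi: \operatorname{Pic}(X) \to \Z^n$, $L \mapsto (L \cdot E_i)_{i=1}^n$. A classical theorem of Mumford asserts that the intersection matrix $(E_i \cdot E_j)$ of an exceptional divisor of a resolution of a normal surface singularity is negative definite; consequently $\phi \circ \iota$ has full rank, so $\phi(\iota(\bigoplus \Z [E_i]))$ has finite index in $\phi(\operatorname{Pic}(X)) \subseteq \Z^n$. Granting temporarily that $\ker(\phi)$ is finite, one deduces formally that $\operatorname{Cl}(R)$ is finite: it is an extension of a finite group by a finite-index quotient of full-rank free abelian groups.

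The heart of the argument is thus to show $\ker(\phi)$ is finite, and this is where the rational singularity hypothesis $R^1 \pi_* \mathcal{O}_X = 0$ is essential. Informally, infinitesimal deformations of line bundles on $X$ are governed by $H^1$ of the structure sheaf on a formal neighborhood of $E$, which vanishes by the formal functions theorem and rationality; complex-analytically this is packaged via the exponential sequence, and in general one substitutes a Kummer-sequence argument or works directly on the formal completion $\widehat{X}$ along $E$ and invokes Grothendieck's existence theorem to relate $\operatorname{Pic}(X)$ and $\operatorname{Pic}(\widehat{X})$. The main obstacle I foresee is making this last step rigorous in the excellent (possibly mixed-characteristic) generality Lipman allows: the negative-definiteness input and the localization sequence are formal, but translating the slogan \emph{$H^1(\mathcal{O}) = 0$ forces numerical equivalence to be finite-to-one on $\operatorname{Pic}$} outside the complex-analytic setting is the technical heart of the proof.
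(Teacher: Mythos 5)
First, a point of comparison: the paper does not prove this statement at all --- it is imported verbatim as \cite[Prop~17.1]{Lip0}, so the only ``proof'' in the paper is the citation. Your outline is in fact a faithful road map of Lipman's own argument: excision to present $\operatorname{Cl}(R)$ as $\operatorname{Pic}(X)$ modulo the subgroup generated by the $\mathcal{O}_X(E_i)$, negative definiteness of the intersection matrix (Mumford over $\C$; Lipman in the general excellent case, his Lemma 14.1), and then the numerical map $\phi \colon \operatorname{Pic}(X) \to \Z^n$. Your formal deduction that finiteness of $\ker(\phi)$ plus full rank of $\phi\circ\iota$ forces $\operatorname{Cl}(R)$ to be finite is also correct.

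However, the step you defer --- finiteness of $\ker(\phi)$ --- is not a technical loose end; it is the entire content of the theorem, and leaving it as a slogan is a genuine gap. Every step you actually carry out (the excision sequence and negative definiteness) holds for an arbitrary normal two-dimensional local domain, with no rationality hypothesis, and the conclusion is false in that generality: for the vertex of the affine cone over a smooth plane cubic, the minimal resolution has exceptional divisor a single elliptic curve $E$ with $E^2=-3$ (negative definite), yet $\operatorname{Cl}(R)$ contains $\operatorname{Pic}^0(E)$, the group of points of the elliptic curve, which is infinite. So the argument as written proves nothing until the rationality input is actually deployed. In Lipman's paper this occupies \S\S 11--14: one shows that $H^1(Z,\mathcal{O}_Z)=0$ for every effective divisor $Z$ supported on $E$ (a consequence of $H^1(X,\mathcal{O}_X)=0$), feeds this into the truncated unit sequences $0 \to \mathcal{O}_Z \to \mathcal{O}_{Z'}^* \to \mathcal{O}_{Z}^* \to 0$ to conclude that $\operatorname{Pic}(Z)$ is detected by degrees on the $E_i$, and then compares $\operatorname{Pic}(X)$ with $\varprojlim_Z \operatorname{Pic}(Z)$ via the theorem on formal functions and Grothendieck existence (after reducing to $R$ complete). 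The upshot is that $\phi$ is actually injective --- indeed an isomorphism onto $\Z^n$ --- whence $\operatorname{Cl}(R)$ has order $|\det(E_i\cdot E_j)|$. Until that chain is supplied, the proposal is an outline of Lipman's proof rather than a proof.
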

\noindent Lipman defines a two-dimensional, normal Noetherian local domain $(R,  \mathfrak{m})$ to have \textbf{rational singularities} if there is a proper, birational map $f \colon X \to \operatorname{Spec}(R)$ from a regular scheme $X$ such that $H^1 (X, \mathcal{O}_X) = 0$. 
 This allows us to affirmatively answer a question of Huneke, Katz, and Validashti \cite[Question~1.1]{HKV2} for rationally singular surfaces: 
\begin{thm}\label{cor: rational singularity 2}
All two-dimensional rational singularities $(R, \mathfrak{m})$ satisfy the uniform symbolic topology property on primes. 
\end{thm}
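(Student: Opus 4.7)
The plan is to combine the two results just cited: Lipman's theorem gives us finiteness of the divisor class group, and Lemma \ref{thm: du Val bound 1} converts that finiteness into a uniform containment for height-one primes. The remaining primes, of height $0$ or $2$, are then disposed of as trivial or ``automatic'' cases because the ring is two-dimensional and local.

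Concretely, I would proceed as follows. Since $(R,\mathfrak{m})$ is a two-dimensional normal Noetherian local domain with a rational singularity, Lipman's theorem quoted above asserts that $\operatorname{Cl}(R)$ is finite; set $D := |\operatorname{Cl}(R)|$. This $D$ is the candidate uniform multiplier. Next, apply Lemma \ref{thm: du Val bound 1} with this $D$: for every ideal of pure height one in $R$, and in particular for every height-one prime $P\subseteq R$, we obtain $P^{(Da)}\subseteq P^a$ for all $a>0$. The primes of height $\neq 1$ are handled directly. For the zero prime the containment is trivial. For a prime $P$ of height $2$, the dimension hypothesis and the local hypothesis force $P=\mathfrak{m}$; since $R_{\mathfrak{m}}=R$, the symbolic power of $\mathfrak{m}$ coincides with its ordinary power, so $\mathfrak{m}^{(Da)}=\mathfrak{m}^{Da}\subseteq \mathfrak{m}^{a}$ for all $a>0$. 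Combining the three cases shows that the single integer $D$ witnesses USTP on all prime ideals of $R$.

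The \emph{main obstacle} is not really inside this argument: once Lemma \ref{thm: du Val bound 1} and Lipman's finiteness theorem are in hand, the proof is a short synthesis. All the substantive content is packaged into those two inputs — Lemma \ref{thm: du Val bound 1} reduces USTP for height-one primes to the finiteness and order of $\operatorname{Cl}(R)$, while Lipman's deep result provides such finiteness in the rationally singular surface setting. What is worth emphasizing in the writeup is (i) that dimension two is essential, since it makes height-one primes the only nontrivial case, and (ii) that the resulting multiplier $D=|\operatorname{Cl}(R)|$ is effective and computable whenever $\operatorname{Cl}(R)$ is, which is the starting point for the explicit toric and du Val calculations promised in the abstract.
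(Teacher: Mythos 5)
Your proposal is correct and matches the paper's argument exactly: Lipman's theorem supplies finiteness of $\operatorname{Cl}(R)$, Lemma \ref{thm: du Val bound 1} handles the height-one primes with $D=|\operatorname{Cl}(R)|$, and the remaining primes ($0$ and $\mathfrak{m}$) are trivial since $R$ is a two-dimensional local domain. This is precisely the synthesis the paper intends, as signaled by the remark following Lemma \ref{thm: du Val bound 1} that in dimension two "height one primes are the only nontrivial class to check."
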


\noindent In what other cases can we find effective values of $D$? In subsection \ref{subsection: rational singularities}, we will use results of Lipman to identify explicit effective values $D$ for du Val singularities over $\C$; see Table \ref{tble:duVal01}. Interestingly, our sharp results show that the smallest $D$ that works for a class of non-regular rings can be arbitrarily large, even if the Krull dimension and multiplicity of the singularity are fixed. 

To produce effective bounds in higher dimension, we turn to the class of normal toric rings containing an arbitrary algebraically closed field $k$. These rings are built from a convex-geometric starter: precisely, a polyhedral cone $\sigma \subseteq \R^n$ generated by a finite set of \textbf{primitive} lattice points $v_i \in \Z^n$, each having coordinates \textbf{whose gcd is one}. It turns out that a toric ring has finite class group
precisely when it is simplicial, that is, the $v_i$ are $\R$-linearly independent \cite[Prop~4.2.2+Prop~4.2.7]{torictome}. In particular, we have a 
\begin{thm}\label{thm: det bound 1}
Let $\sigma \subseteq \R^n$ be a full-dimensional simplicial strongly convex rational polyhedral cone, and let $R = k[\sigma^\vee \cap \Z^n]$ be the corresponding toric ring over an algebraically closed field $k$. Then $R$ satisfies the uniform symbolic topology property on ideals of pure height one with multiplier $D$, where $D$ is the volume of the $n$-parallelotope spanned by the primitive generators of $\sigma$.  
\end{thm}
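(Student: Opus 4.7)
The plan is to reduce the theorem to Lemma \ref{thm: du Val bound 1} by computing the divisor class group $\operatorname{Cl}(R)$ and showing it is finite abelian of order equal to the claimed multiplier $D$. Since $\sigma$ is strongly convex, rational, and full-dimensional, the ring $R = k[\sigma^\vee \cap \Z^n]$ is a normal Noetherian domain, so Lemma \ref{thm: du Val bound 1} will apply directly once the class group has been identified.

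The first step is to invoke the standard exact sequence for the class group of a normal affine toric variety (see, e.g., \cite[Thm.~4.1.3]{torictome}):
\[
M \xrightarrow{\phi} \Z^r \to \operatorname{Cl}(R) \to 0,
\]
where $M$ is the character lattice dual to $\Z^n$, the integer $r$ is the number of rays of $\sigma$, and $\phi$ sends $m \in M$ to the tuple of pairings $(\langle m, v_i\rangle)_{i=1}^r$ against the primitive ray generators $v_1,\dots,v_r$. Because $\sigma$ is simplicial and full-dimensional, we have $r = n$, and $\phi$ becomes a $\Z$-linear map $\Z^n \to \Z^n$ whose matrix, in the standard basis, has the coordinates of the $v_i$ as its rows.

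The second step is the elementary linear-algebra observation that since the $v_i$ are $\R$-linearly independent, $\phi$ is injective with finite cokernel, and the order of that cokernel equals $|\det[v_1 \,|\, \cdots \,|\, v_n]|$. This determinant is, by its geometric interpretation, exactly the volume of the $n$-parallelotope spanned by $v_1,\dots,v_n$, so $\operatorname{Cl}(R)$ is finite abelian of order $D$ as defined in the statement. Applying Lemma \ref{thm: du Val bound 1} to $R$ with this value of $D$ then yields the desired containment $\mathfrak{q}^{(Da)} \subseteq \mathfrak{q}^a$ for every pure height-one ideal $\mathfrak{q} \subseteq R$ and every integer $a > 0$.

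I do not foresee a serious obstacle. The essential content is the class-group computation, which is a direct application of the toric exact sequence under the simplicial hypothesis; the passage from the order of $\operatorname{Cl}(R)$ to the uniform symbolic bound is immediate via Lemma \ref{thm: du Val bound 1}. The only point requiring care is bookkeeping the two natural conventions (rows versus columns of the matrix representing $\phi$), which does not affect $|\det|$ and hence does not affect $D$.
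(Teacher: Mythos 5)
Your proposal is correct and follows essentially the same route as the paper: the paper likewise invokes the toric exact sequence $0 \to \Z^n \xrightarrow{\phi} \operatorname{Div}_T(U_\sigma) \to \operatorname{Cl}(U_\sigma) \to 0$, observes that the simplicial full-dimensional hypothesis makes $\phi$ a square integer matrix with rows the primitive ray generators so that $\operatorname{Cl}(R)$ is finite of order $|\det(A_G)|$, and then concludes via Lemma \ref{thm: du Val bound 1} (through Proposition \ref{prop: unmixed ideals 1}). The only cosmetic difference is that the paper phrases the cokernel computation via Smith normal form, which changes nothing.
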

\noindent In particular, this volume can be computed as the determinant of a positive-definite integer matrix. See subsection \ref{subsection: Proof 1} for details. 


\noindent \textbf{Conventions:} All our rings are commutative with identity. From subsection \ref{section: MainTheorem} onwards, and except when stated otherwise, all our rings $R$ will be Noetherian normal domains, and $k$ will denote an algebraically closed field. All our algebraic varieties over $k$ are irreducible.  When we say $R$ is \textbf{graded}, we mean that 
$R = \bigoplus_{d \ge 0} R_d$ is graded by $\N$, with $R_0$ being a field, and $\mathfrak{m} = \bigoplus_{d > 0} R_d$ the unique homogeneous maximal ideal.  


\section{Local Uniform Annihilation and Symbolic Powers}\label{section: preliminaries 1}

\noindent \textbf{Defining Symbolic Powers.} If $P$ is a prime ideal in a nonzero Noetherian ring $R$, its \textbf{$a$-th ($a \in \Z_{>0}$) symbolic power}
$$P^{(a)} := P^a R_P \cap R$$ is the unique $P$-primary component in any minimal primary decomposition of $P^a$. 
More generally, if $I$ is any proper ideal of $R$, its \textit{$a$-th symbolic power} is  
$$I^{(a)} := I^a W^{-1} R \cap R, \mbox{ where }W = R - \bigcup \{P \colon P \in \operatorname{Ass}_R (R/I)\}.$$ 
While $I^a \subseteq I^{(a)}$ for all $a$, the converse can fail for $a>1$: $I^{(1)} = I$ since $W$ is the set of nonzerodivisors modulo $I$.

\subsection{Uniform Annihilation and Class Groups}\label{section: MainTheorem} 

Given a normal Noetherian domain $R$, the \textbf{divisor class group} $\operatorname{Cl}(R) = \operatorname{Cl}(\operatorname{Spec}(R))$ of $R$ is the free abelian group on the set of height one prime ideals of $R$ modulo relations $$a_1 P_1 + \ldots + a_r P_r = 0,$$ whenever the ideal $P_1^{(a_1)} \cap \ldots \cap 
P_r^{(a_r)}$ is principal (see Hochster's lecture notes \cite{hoch2} for more details on this definition; see also Hartshorne's presentation \cite[Ch.II,$\S$6]{Hartsh0}). 
It is well known that the class group $\operatorname{Cl}(R)$ is trivial if and only if $R$ is a UFD \cite[Ch.II,$\S$6]{Hartsh0}. Both conditions mean that every height one prime ideal in $R$ is principal. We now show this latter assertion is equivalent to saying that all symbolic powers of a height one prime ideal $P \subseteq R$ are principal, so $P^{(a)} = P^a$ for all $a>0$, and all height one primes $P$ in a UFD. We follow an anonymous referee's advice to state a 
\begin{lem}
If $R$ is an arbitrary Noetherian ring, and $P = (f)$ is prime with $f$ a nonzerodivisor, then $P^{(a)} = (f^a)$ for all $a>0$.  
\end{lem}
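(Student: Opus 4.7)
The plan is to work directly from the definition $P^{(a)} = P^a R_P \cap R$. Since $P^a = (f^a)$, the inclusion $(f^a) \subseteq P^{(a)}$ is immediate, so the content of the lemma is the reverse inclusion $P^{(a)} \subseteq (f^a)$.

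To prove this, I would extract the following elementary statement and prove it by induction on $a$: if $s \in R \setminus (f)$ and $sx \in (f^a)$, then $x \in (f^a)$. Granting this claim, the lemma follows: any $x \in P^{(a)}$ satisfies $x = f^a t/s$ in $R_P$ for some $t \in R$ and $s \in R \setminus (f)$, which rearranges to $sx = f^a t$ in $R$, and the claim then puts $x \in (f^a)$.

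For the induction, the base case $a=1$ is exactly the statement that $(f)$ is a prime ideal: from $sx \in (f)$ and $s \notin (f)$, one concludes $x \in (f)$. For the inductive step, assume the claim for $a-1$ and suppose $sx = f^a t$ with $s \notin (f)$. Since $sx \in (f)$, the base case gives $x = fx'$ for some $x' \in R$, so $s f x' = f^a t$. Here I would invoke the hypothesis that $f$ is a nonzerodivisor to cancel one $f$, yielding $sx' = f^{a-1} t$; the inductive hypothesis then puts $x' \in (f^{a-1})$, hence $x \in (f^a)$.

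The only step that really uses a nontrivial hypothesis beyond $P$ being prime is the cancellation of $f$ in the inductive step, so the nonzerodivisor assumption is essential there; this is the one spot to be careful. Nothing else requires $R$ to be a domain or even reduced, which is consistent with the lemma being stated for an arbitrary Noetherian ring.
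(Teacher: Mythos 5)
Your proof is correct and is essentially the paper's argument: both proceed by induction on $a$, using primeness of $(f)$ for the base case and the nonzerodivisor hypothesis to cancel a factor of $f$ in the inductive step. The only cosmetic difference is that you induct on the auxiliary claim that $sx \in (f^a)$ with $s \notin (f)$ forces $x \in (f^a)$, peeling off one $f$ at a time, whereas the paper inducts on the equality $P^{(a)} = (f^a)$ itself and cancels $f^a$ at once.
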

\begin{proof}
Induce on $a$ with base case $a=1$. Assuming the statement for some $a \ge 1$, take $x \in P^{(a+1)}$. Since $x  \in P^{(a+1)} \subseteq P^{(a)} = (f^a)$, $x = f^a y$ for some $y$. By the choice of $x$, there is $s \not\in P$ with $s x = s f^a y \in P^{a+1} = (f^{a+1})$. Since $f$ is a nonzerodivisor,  $sy \in (f)  = P$, which is prime. Therefore, $y \in P = (f)$, and $x \in (f^{a+1})$. 
\end{proof}

Given a Noetherian reduced ring $R$, recall that the affine scheme $X = \operatorname{Spec}(R)$ is \textbf{locally factorial} if $R_\mathfrak{p}$ is a UFD for all prime ideals $\mathfrak{p} \subseteq R$; or equivalently, if $\operatorname{Cl}(R_\mathfrak{p}) = 0$ for all primes $\mathfrak{p} \subseteq R$. 

\begin{defn}\label{def: uniformly torsion domain}
A reduced Noetherian normal ring $R$ is \textbf{(locally) uniformly annihilated} if there exists an integer (multiplier) $D > 0$ such that one, and hence all, of the following equivalent conditions will hold: 
\begin{enumerate}
\item $D \cdot \operatorname{Cl}(R_\mathfrak{p}) = 0$  for all prime ideals $\mathfrak{p} \subseteq R$. More precisely, $P^{(D)}R_\mathfrak{p} = (PR_\mathfrak{p})^{(D)}$ is principal for all height one primes $P \subseteq \mathfrak{p}$. 
\item The annihilator ideal $\operatorname{Ann}_\Z (\operatorname{Cl}(R_{\mathfrak{p}})) \supseteq D \Z$ for all prime ideals $\mathfrak{p} \subseteq R$.
\item $D \cdot \operatorname{Cl}(R_\mathfrak{m}) = 0$  for all maximal ideals $\mathfrak{m} \subseteq R$. 
\end{enumerate}
\end{defn}

\noindent Notice that (3) implies (1) since principal ideals remain principal when we extend along the ring map $R_{\mathfrak{m}} \to R_{\mathfrak{p}}$.   
Note that since one can take $D=1$ at locally factorial points of $\operatorname{Spec}(R)$, it suffices to compute $D$ at maximal ideals $\mathfrak{m} \subseteq R$ such that $R_\mathfrak{m}$ is not a UFD. In language more familiar to algebraic geometers, $D$ annihilates the local class group $\operatorname{Cl}_{loc}(X) = \operatorname{Cl}(X) / \operatorname{Pic}(X)$ of $X = \operatorname{Spec}(R)$.  

\begin{rem}
A Noetherian normal domain $R$ is uniformly annihilated when the annihilator ideal $\operatorname{Ann}_\Z (\operatorname{Cl}(R)) \neq 0$ (e.g., if $\operatorname{Cl}(R)$ is finite). However, the smallest local uniform multiplier need not annihilate  $\operatorname{Cl}(R)$ globally. For example, if the Dedekind domain $R = \Z [\sqrt{-5}]$, then $\operatorname{Cl}(R) \cong \Z / 2 \Z$, while any Dedekind domain $R$ is locally factorial, so $\operatorname{Cl}(R)/\operatorname{Pic}(R)$ is trivial in the language of algebraic geometry, and $D=1$ is the optimal multiplier.
\end{rem}

\begin{rem}\label{rem:localgradedcyclic} If $R$ is local, then the smallest uniform multiplier would generate $\operatorname{Ann}_\Z (\operatorname{Cl}(R))$. When $R$ is  $\N$-graded over a field with unique graded maximal ideal $\mathfrak{m}$, $\operatorname{Cl}(R) \cong \operatorname{Cl}(R_{\mathfrak{m}})$ \cite[Lem~5.1]{Murthy1} and we again consider a generator of  $\operatorname{Ann}_\Z (\operatorname{Cl}(R))$. In each case, this optimal multiplier $D$ is the class group's order if and only if the group is finite cyclic; we fill in Table \ref{tble:duVal01}'s final column via this fact.
\end{rem} 

\subsection{The Main Lemma}\label{subsection: main lemma}  Symbolic powers are notoriously difficult to compute by hand, i.e., to give generators for. Working in height one, item (a) of the following proposition allows us to study them indirectly. Note that if $I$ is a proper ideal in a Noetherian ring, we say that $I$ has \textbf{pure height} 
$h$ if all of its associated primes have height $h$, in particular, none are embedded.

\begin{prop}\label{prop: unmixed ideals 1}
Let $R$ be a Noetherian normal domain, and $\mathfrak{q}$ any ideal of pure height one with associated primes $P_1, \ldots, P_r$. Then: 
\begin{enumerate}
\item[(a)] There exist positive integers $b_1, \ldots, b_r$, uniquely determined by $\mathfrak{q}$, such that the symbolic power $\mathfrak{q}^{(E)} = P_1^{(E b_1)} \cap \ldots \cap P_r^{(E b_r)}$ for all $E > 0$. 
\item[(b)] If either $(1)$ $D \cdot \operatorname{Cl}(R) = 0$, or simply $(2)$ $\mathfrak{q}^{(D)}$ is principal,  
then for all integers $a>0$, $\mathfrak{q}^{(Da)} = (\mathfrak{q}^{(D)})^a$ is principal and  $\mathfrak{q}^{(Da)} \subseteq \mathfrak{q}^a$.
\end{enumerate}
\end{prop}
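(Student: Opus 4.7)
The plan is to reduce everything to the behavior of $R$ at its height one primes. Since $R$ is a Noetherian normal domain, each localization $R_{P_i}$ is a discrete valuation ring, so there is a unique positive integer $b_i$ with $\mathfrak{q} R_{P_i} = (P_i R_{P_i})^{b_i}$; positivity holds because $P_i \in \operatorname{Ass}_R(R/\mathfrak{q})$ forces $\mathfrak{q} \subseteq P_i$, and the $b_i$ are visibly determined by $\mathfrak{q}$ alone.

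For part (a), I would pass to the semilocalization $W^{-1}R$ with $W := R \setminus \bigcup_i P_i$. Prime avoidance shows that $W^{-1}R$ is a semilocal one-dimensional normal Noetherian domain with maximal ideals $P_i W^{-1}R$, hence a semilocal Dedekind domain; any such ring is a principal ideal domain. In this PID, unique factorization gives
\[
\mathfrak{q}^E W^{-1}R \;=\; \prod_i (P_i W^{-1}R)^{E b_i} \;=\; \bigcap_i (P_i W^{-1}R)^{E b_i}.
\]
Contracting back to $R$ via $(P_i W^{-1}R)^{E b_i} \cap R = P_i^{(E b_i)}$ — here the embedded primary components of $P_i^{E b_i}$ all expand to the unit ideal in $W^{-1}R$, since any embedded prime properly contains the height one prime $P_i$ and therefore meets $W$ — yields the desired identity $\mathfrak{q}^{(E)} = \bigcap_i P_i^{(E b_i)}$.

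For part (b), I first argue that hypothesis (1) forces hypothesis (2). The ideal $\mathfrak{q}^{(D)} = \bigcap_i P_i^{(D b_i)}$ corresponds to the Weil divisor $\sum_i D b_i P_i$, whose class is $D \cdot \sum_i b_i [P_i] = 0$ in $\operatorname{Cl}(R)$, so $\mathfrak{q}^{(D)}$ is principal. Now assuming (2), write $\mathfrak{q}^{(D)} = (f)$; then $f$ has $P_i$-adic order $D b_i$ for each $i$ and is a unit at every other height one prime. Consequently $f^a$ has $P_i$-adic order $a D b_i$, and I claim $(f^a) = \bigcap_i P_i^{(a D b_i)}$. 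The inclusion $(\subseteq)$ is immediate from the valuation computation; for $(\supseteq)$, given $g$ in the right-hand side, $g/f^a$ lies in $R_{P_i}$ for each $i$ (from $v_{P_i}(g) \geq a D b_i = v_{P_i}(f^a)$) and in $R_Q$ for every other height one prime $Q$ (since $f^a$ is a unit there), so by the Krull identity $R = \bigcap_P R_P$ over height one primes $P$, one has $g/f^a \in R$. Combined with part (a), this gives $\mathfrak{q}^{(D a)} = (f^a) = (\mathfrak{q}^{(D)})^a$, which is principal. Finally, $\mathfrak{q}^{(D)} \subseteq \mathfrak{q}^{(1)} = \mathfrak{q}$ since $\mathfrak{q}^D \subseteq \mathfrak{q}$ (symbolic powers preserve containments), so $f \in \mathfrak{q}$ and therefore $\mathfrak{q}^{(D a)} = (f^a) \subseteq \mathfrak{q}^a$.

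The only real subtlety is identifying $W^{-1}R$ as a PID in (a) — or equivalently invoking the Krull intersection $R = \bigcap_P R_P$ in (b) — which is precisely where normality is used; without it the embedded-prime cleanup and the $g/f^a \in R$ step both fail. Everything else amounts to valuation-theoretic bookkeeping at the finitely many height one primes attached to $\mathfrak{q}$.
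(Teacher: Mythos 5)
Your proof is correct and follows essentially the same route as the paper: read off the multiplicities $b_i$ from the discrete valuations at the $P_1,\ldots,P_r$, use the class group hypothesis to make $\mathfrak{q}^{(D)}$ principal, and contract back using normality. The only differences are cosmetic — you semilocalize at $W^{-1}R$ where the paper localizes at each $P_i$ separately, and your Krull-intersection argument $R=\bigcap_{\operatorname{ht}P=1}R_P$ spells out the contraction step that the paper compresses into ``we simply contract back to $R$.''
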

\begin{proof} First, we prove (a). 
For each $i$, the local ring $S_i = R_{P_i}$ is a discrete valuation ring, and we let $t_i\in S_i$ be a local uniformizing parameter. Then $S_i$ is a PID, so an ideal $J \subseteq S_i$ is $P_i S_i = (t_i) S_i$-primary if and only if $J = (P_i S_i)^n = P_i^n S_i = (t_i^n)S_i$ for some $n>0$. In particular, $\mathfrak{q} S_i$ is $P_i S_i$-primary, say $\mathfrak{q} S_i = (t_i^{b_i}) S_i$. Then the $P_i$-primary component of $\mathfrak{q}$ is $P_i^{(b_i)}$.  Thus 
$\mathfrak{q} = P_1^{(b_1)} \cap \ldots \cap P_r^{( b_r)}$ and clearly the $b_i$ are uniquely determined by $\mathfrak{q}$. Similarly, $\mathfrak{q}^E S_i = (t_i^{E b_i}) S_i$ for $E > 0$, so $\mathfrak{q}^{(E)} = P_1^{(E b_1)} \cap \ldots \cap P_r^{(E b_r)}$ for all $E > 0$. 

For (b), first note that (1) implies (2): indeed, since $\mathfrak{q} = P_1^{(b_1)} \cap \cdots \cap P_r^{(b_r)}$, it yields an element $[\mathfrak{q}] := b_1 [P_1] + \cdots +  b_r [P_r] \in \operatorname{Cl}(R)$, and since $0 = D [\mathfrak{q} ] = [\mathfrak{q}^{(D)}] \in \operatorname{Cl}(R)$, we conclude that $\mathfrak{q}^{(D)}$ is principal. So we proceed assuming $\mathfrak{q}^{(D)}$ is principal. Since $\mathfrak{q}^{(D)} \subseteq \mathfrak{q}^{(1)} = \mathfrak{q}$, by taking $a$-th powers, part (b) follows in full once we explain how $\mathfrak{q}^{(Da)} = (\mathfrak{q}^{(D)})^a$ for all  integers $a>0$.  Indeed, using the notation in the proof of (a),  $\mathfrak{q}^{(Da)}S_i = (\mathfrak{q}^{(D)})^a S_i = (t^{D  a b_i}) S_i$ for all $i$, and we simply contract back to $R$.   
\end{proof}
\noindent 

We now prove the main lemma of the paper, expanding on Lemma \ref{thm: du Val bound 1}:

\begin{lem}\label{thm: du Val bound 2}
Let $R$ be a reduced Noetherian normal ring. Suppose $D$ annihilates $\operatorname{Cl}(R_\mathfrak{m})$ for all maximal ideals $\mathfrak{m}$ in $R$. Then for all $a> 0$ and all ideals $\mathfrak{q} \subseteq R$ of pure height one, the symbolic power $\mathfrak{q}^{(D a)} \subseteq \mathfrak{q}^a$. 
\end{lem}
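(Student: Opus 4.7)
The plan is to deduce Lemma \ref{thm: du Val bound 2} from Proposition \ref{prop: unmixed ideals 1}(b) via a local-to-global argument. Since containment of ideals in a Noetherian ring is a local property, it suffices to fix a maximal ideal $\mathfrak{m} \subseteq R$ and show $\mathfrak{q}^{(Da)} R_{\mathfrak{m}} \subseteq \mathfrak{q}^{a} R_{\mathfrak{m}}$.

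First I would dispose of the trivial case: if $\mathfrak{q} \not\subseteq \mathfrak{m}$, then for each associated prime $P_i$ of $\mathfrak{q}$ there is an element of $\mathfrak{q} \subseteq P_i$ lying outside $\mathfrak{m}$, so $P_i R_{\mathfrak{m}} = R_{\mathfrak{m}}$, and both $\mathfrak{q}^{a} R_{\mathfrak{m}}$ and $\mathfrak{q}^{(Da)} R_{\mathfrak{m}}$ reduce to the unit ideal. In the remaining case $\mathfrak{q} \subseteq \mathfrak{m}$, I would next observe that $R_{\mathfrak{m}}$ is a normal Noetherian local ring, hence a normal domain (a local ring has no nontrivial idempotents, so normal plus reduced forces $\operatorname{Spec}(R_{\mathfrak{m}})$ to be integral). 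Moreover, $\mathfrak{q} R_{\mathfrak{m}}$ is still of pure height one, with associated primes $\{P_i R_{\mathfrak{m}} : P_i \subseteq \mathfrak{m}\}$.

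The hypothesis $D \cdot \operatorname{Cl}(R_{\mathfrak{m}}) = 0$ then lets me invoke Proposition \ref{prop: unmixed ideals 1}(b) inside $R_{\mathfrak{m}}$, yielding $(\mathfrak{q} R_{\mathfrak{m}})^{(Da)} \subseteq (\mathfrak{q} R_{\mathfrak{m}})^{a}$. To close the gap with what I need, I would use the explicit primary decomposition $\mathfrak{q}^{(Da)} = \bigcap_i P_i^{(D a b_i)}$ supplied by Proposition \ref{prop: unmixed ideals 1}(a): localizing at $R \setminus \mathfrak{m}$, the factors with $P_i \not\subseteq \mathfrak{m}$ extend to the unit ideal and disappear, while the surviving factors satisfy $P_i^{(n)} R_{\mathfrak{m}} = (P_i R_{\mathfrak{m}})^{(n)}$ since $R \setminus \mathfrak{m}$ meets none of those $P_i$. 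This gives the identity $(\mathfrak{q} R_{\mathfrak{m}})^{(Da)} = \mathfrak{q}^{(Da)} R_{\mathfrak{m}}$, and combined with the elementary $(\mathfrak{q} R_{\mathfrak{m}})^{a} = \mathfrak{q}^{a} R_{\mathfrak{m}}$, completes the local containment. I do not anticipate a real obstacle here: the conceptual content is already carried by Proposition \ref{prop: unmixed ideals 1}, and the only point requiring care is the bookkeeping of symbolic powers under localization, which is made routine by the explicit primary decomposition.
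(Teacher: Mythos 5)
Your proposal is correct and follows essentially the same route as the paper: reduce to the local case via the fact that ideal containment is a local property, then invoke Proposition \ref{prop: unmixed ideals 1}(b) in $R_{\mathfrak{m}}$. You are somewhat more careful than the paper in verifying that $\mathfrak{q}^{(Da)}R_{\mathfrak{m}} = (\mathfrak{q}R_{\mathfrak{m}})^{(Da)}$ via the primary decomposition from part (a), a compatibility the paper's proof leaves implicit.
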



\begin{proof} 
First, we reduce to the local case. Indeed, recall that given two ideals $I , J$ in $R$, the inclusion $I \subseteq J$ holds (that is, the $R$-module $\frac{J + I}{J} = 0$) if and only if $I R_\mathfrak{m} \subseteq J R_\mathfrak{m}$ (that is, the $R_\mathfrak{m}$-module $\frac{(J + I) R_\mathfrak{m}}{J R_\mathfrak{m}} = \frac{J R_\mathfrak{m} + I R_\mathfrak{m}}{J R_\mathfrak{m}}= 0$) for all maximal ideals $\mathfrak{m} \subseteq R$. So we may assume $R$ is a normal Noetherian local domain (in keeping with $R$ being uniformly annihilated). 
Then, for $R$, $D$, and all $\mathfrak{q}$ as stated, Proposition \ref{prop: unmixed ideals 1}(b) gives us the inclusions. 
\end{proof}

\section{Effective Uniform Bounds}\label{section: DetBound} 

We begin with a selective review of toric geometry. For more details (e.g., omitted definitions, proofs, and relevant exercises) we refer the reader to: Chapters 1,3,4 of Cox, Little, and Schenck \cite{torictome}; or alternatively, Chapters 1 and 3 of Fulton  \cite{introtoric}.\footnote{While both references work over $\C$, the facts we review in this paper hold over any algebraically closed field.} 
For simplicity, we work with the standard lattice $\Z^n$ in $\R^n$, and a fixed algebraically closed field $k$.

Any normal affine toric $n$-fold can be obtained from a strongly convex, rational polyhedral cone $\sigma$ in $\R^n$, that is, a closed, convex set 
$$\sigma = \operatorname{Cone}(G) = \left\lbrace\sum_{v \in G} a_v \cdot v \colon \mbox{ each }a_v \in \R_{\ge 0}\right\rbrace  \subseteq \R^n,$$ 
containing no line through the origin (strong convexity) and generated by a (possibly empty) finite set $G$  of nonzero vectors in $\Z^n$ (rationality). 
The cone has \textbf{dimension} $\dim \sigma := \dim (\R\mbox{-linear span of }G) \le n;$ if $\dim \sigma  =  n$, then $\sigma$ is called \textbf{full}.
For such a cone $\sigma = \operatorname{Cone}(G) \subseteq \R^n$, the \textbf{dual} $$\sigma^\vee := \{w \in \R^n \colon \langle w , v \rangle \ge 0 \mbox{ for all }v \in G\} \quad \mbox{($\langle \cdot , \cdot \rangle$ is dot product in $\R^n$)}$$ is rational polyhedral and full, from which we obtain
\begin{enumerate}
\item A finitely-generated semigroup under addition $$(S_\sigma, +) : = \sigma^\vee \cap \Z^n = 
\left\lbrace \sum_{i=1}^r a_i m_i \colon \mbox{ each }a_i \in \Z_{\ge 0}\right\rbrace $$ for some finite list of generators $m_1, \ldots, m_r \in \sigma^\vee \cap \Z^n$. 
\item A normal domain of finite type over $k$: namely, the semigroup ring $R = k[S_\sigma]$ generated as a $k$-vector space by the characters $\chi^m$ with $m \in S_\sigma$. Note that $k[S_\sigma]$ is generated as a $k$-algebra by characters $\chi^{m_1}, \ldots, \chi^{m_r}$ where the $m_i$ are the semigroup generators of $S_\sigma$ in (1). 
Any domain so  obtained is a \textbf{(normal) toric ring} (or normal affine semigroup algebra, by definition) over $k$.
\item A normal toric $n$-fold, $U_\sigma = \operatorname{Spec}(k[S_\sigma])$. 
\end{enumerate}
The cone $\sigma \subseteq \R^n$ as above is \textbf{simplicial} if $\sigma = \{0\}$ or $\sigma = \operatorname{Cone}(G)$ for some $G \subseteq \Z^n$  forming part of a $\R$-basis for $\R^n$, and we call the corresponding toric ring and toric variety \textbf{simplicial}. 
When $\sigma \neq \{0\}$, we may assume $G = \{v_1, \ldots, v_{\dim(\sigma)}\}$ consists of \textbf{primitive} vectors, i.e., that the coordinates of each $v_i$ have no common prime integer factor. When $\sigma$ is full, we index the $v_i$ so the matrix $A_G \in \mbox{Mat}_{n\times n} (\Z)$ whose $i$-th row is $v_i$ has determinant $d  = \det(A_G) \in \Z_{>0}$, the volume of the $n$-parallelotope spanned by $v_1, \ldots, v_n$. 

Working over an algebraically closed field $k$, each ray $\rho \in \Sigma(1)$, the collection of \textbf{rays} (one-dimensional faces) of $\sigma$,  has a generator $u_\rho \in \rho \cap \Z^n$ that is primitive, and yields a torus-invariant prime divisor $D_\rho$ on $X = U_\sigma$, per the Orbit-Cone Correspondence \cite[Thm~3.2.6]{torictome}.
The torus-invariant Weil divisors on $X$ form a free abelian group  \cite[Exercise~4.1.1]{torictome} 
$$\operatorname{Div}_{T} (X) = \bigoplus_{\rho \in \Sigma(1)} \Z D_\rho.$$ 
We close by recording without proof a few facts from sections 4.1 and 4.2 of \cite{torictome} about the divisor class group $\operatorname{Cl}(X)$, specialized to the affine  case.
\begin{thm}\label{thm: exact sequence}
Take a normal affine toric variety $X = U_\sigma$. Then 
\begin{enumerate}
\item $\operatorname{Cl}(X)$ is finite abelian if and only if $\sigma$ is simplicial $($cf.,  \cite[Prop~4.2.2 and Prop~4.2.7]{torictome}$)$. If so, then all Weil divisors on $X$ are $\Q$-Cartier of index bounded by the order of $\operatorname{Cl}(X)$.   
\item If $G = \{u_\rho \colon \rho \in \Sigma(1)\}$ spans $\R^n$, i.e., $\sigma$ is full, then we have a short exact sequence of abelian groups 
\begin{equation}\label{eqn:SES001}
0 \to \Z^n \stackrel{\phi}{\to} \operatorname{Div}_{T} (X) 
\to \operatorname{Cl}(X) \to 0,
\end{equation} 
where $\phi (m) = \operatorname{div}(\chi^m)=  \sum_{\rho \in \Sigma(1)}  \langle m , u_\rho\rangle  D_\rho$, and $\langle \cdot, \cdot \rangle$ is dot product.
\end{enumerate}
\end{thm}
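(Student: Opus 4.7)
The plan is to prove part (2) first and then deduce part (1) from a suitable extension of the exact sequence. The proof decomposes naturally into a geometric input (identifying $T$-invariant principal divisors and showing $T$-invariant divisors surject onto $\operatorname{Cl}(X)$) and a linear-algebraic input (computing the cokernel of an integer matrix).

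For part (2), I would verify the three pieces of the sequence (\ref{eqn:SES001}) in turn. Injectivity of $\phi$ is immediate from full-dimensionality: if $\langle m, u_\rho\rangle = 0$ for every $\rho \in \Sigma(1)$, then $m$ is orthogonal to the $\R$-span of the $u_\rho$, which is all of $\R^n$, forcing $m = 0$. Exactness in the middle comes from recognizing that a $T$-invariant principal divisor must be $\operatorname{div}(f)$ for some $T$-eigenvector $f \in k(X)^\times$; up to a unit of $k$, such an $f$ is a character $\chi^m$, and $\operatorname{div}(\chi^m) = \sum_\rho \langle m, u_\rho \rangle D_\rho = \phi(m)$ by direct computation of the order of vanishing along each $D_\rho$. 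For surjectivity of $\operatorname{Div}_T(X) \to \operatorname{Cl}(X)$, I would use that the open torus $T \subset X$ is a localization of a Laurent polynomial ring over $k$, hence a UFD, so $\operatorname{Cl}(T) = 0$; the excision-type sequence $\operatorname{Div}_T(X) \to \operatorname{Cl}(X) \to \operatorname{Cl}(T) \to 0$ attached to the complementary divisor $X \setminus T = \bigcup_\rho D_\rho$ then finishes the job.

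For part (1), I would deduce the equivalence by computing $\operatorname{Cl}(X)$ as a cokernel. When $\sigma$ is full, the sequence from (2) presents $\operatorname{Cl}(X)$ as the cokernel of $\phi : \Z^n \to \bigoplus_\rho \Z D_\rho$, which is finite precisely when $\phi$ has full rank and the number of rays equals $n$, i.e.\ when $\sigma$ is simplicial. The non-full case reduces to the full case after splitting off a torus factor $X \cong T^{n-\dim \sigma} \times U_{\sigma'}$, where $\sigma'$ is $\sigma$ regarded as a full-dimensional cone inside its own $\R$-linear span and $\operatorname{Cl}(T^{n-\dim\sigma}) = 0$. For the $\Q$-Cartier index bound, observe that $|\operatorname{Cl}(X)|$ annihilates every class, so $|\operatorname{Cl}(X)| \cdot D$ is principal—hence Cartier—for every Weil divisor $D$, giving the asserted bound on the index.

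The main obstacle I anticipate is the surjection $\operatorname{Div}_T(X) \twoheadrightarrow \operatorname{Cl}(X)$: while the excision idea is standard, one must set it up carefully for Weil divisors on the normal variety $X$, with $\bigcup_\rho D_\rho$ as the closed complement of $T$, and use that $X$ is normal so that the notions of Weil divisor, prime divisor, and divisor class behave as expected. Everything else reduces either to elementary lattice arithmetic or to direct unwinding of the toric dictionary between characters and principal divisors.
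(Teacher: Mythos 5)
The paper records this theorem without proof, deferring entirely to \cite[\S\S 4.1--4.2]{torictome}, and your sketch is precisely the standard argument given there: injectivity of $\phi$ from fullness, middle exactness via $\operatorname{ord}_{D_\rho}(\chi^m)=\langle m,u_\rho\rangle$ together with the fact that a function with divisor supported on $X\setminus T$ restricts to a unit of $k[x_1^{\pm},\dots,x_n^{\pm}]$ and hence is $c\chi^m$, surjectivity by excision against the factorial open torus, finiteness of the cokernel iff $|\Sigma(1)|=n$ iff $\sigma$ is simplicial (splitting off a torus factor when $\sigma$ is not full), and the $\Q$-Cartier index bound from $|\operatorname{Cl}(X)|\cdot[D]=0$. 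Your proposal is correct and matches the cited source's approach; there is nothing in the paper's own text to compare it against beyond the citation.
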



\subsection{The Toric Case}\label{subsection: Proof 1}  The short exact sequence \ref{eqn:SES001} makes it easy to compute the divisor class group $\operatorname{Cl}(U_\sigma)$. 
If $\Sigma(1) = \{\rho_1, \ldots, \rho_r\}$ has $r$ elements,\footnote{Note that $r \ge n$ since $G$ spans $\R^n$ by hypothesis in Theorem \ref{thm: exact sequence}(2).} the map $\phi$ can be treated, up to isomorphism, as a map $\Z^n \to \Z^r$ given by the matrix $A = (u_{\rho_1}, \ldots , u_{\rho_r})^T$ where $T$ denotes transpose: that is, the $i$-th row of $A$ is given by the coordinates of $u_{\rho_i}$. $\operatorname{Cl}(U_\sigma)$ is the cokernel of $\phi$, and hence can be computed up to isomorphism by first finding the Smith normal form of $A$. 
Note that since the alternating sum of the ranks in the short exact sequence vanishes, $\operatorname{Cl}(U_\sigma)$ has rank  
$r - n$. 
We now re-express Theorem \ref{thm: det bound 1}:
\begin{cor}
Let $\sigma = \operatorname{Cone}(G) \subseteq \R^n$ be a simplicial full strongly convex rational polyhedral cone. Let $U_\sigma$ be the corresponding affine toric variety.
Let $A_G \in \operatorname{Mat}_{n \times n} (\Z)$ be the accompanying matrix of primitive ray generators. Then $\operatorname{Cl}(U_\sigma)$ is a finite abelian group of order $d = | \det(A_G) |$.  
\end{cor}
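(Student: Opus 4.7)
The plan is to read off the class group directly from the short exact sequence \eqref{eqn:SES001} of Theorem \ref{thm: exact sequence}(2), specialized to the simplicial full case. The payoff is that under these hypotheses $\phi$ becomes a linear map between free abelian groups of equal rank given by the square matrix $A_G$, so the cokernel's order is $|\det(A_G)|$ by Smith normal form.

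First I would unpack the hypotheses. Since $\sigma$ is full and simplicial, the primitive ray generators $u_{\rho_1}, \ldots, u_{\rho_n}$ are $\R$-linearly independent and span $\R^n$; in particular $|\Sigma(1)| = n$, so the toric divisor group $\operatorname{Div}_T(U_\sigma) = \bigoplus_{i=1}^n \Z D_{\rho_i}$ is free of rank $n$. Choosing the ordered basis $D_{\rho_1}, \ldots, D_{\rho_n}$ on the right-hand side and the standard basis $e_1, \ldots, e_n$ of $\Z^n$ on the left, the map $\phi(m) = \sum_i \langle m, u_{\rho_i} \rangle D_{\rho_i}$ is represented by the matrix whose $(i,j)$-entry is the $j$-th coordinate of $u_{\rho_i}$; that is, exactly $A_G$ (up to a transpose convention that does not affect the determinant).

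Next I would observe that $\det(A_G) \neq 0$ precisely because the $u_{\rho_i}$ are $\R$-linearly independent, so $\phi$ is injective; this recovers the injectivity already encoded in the short exact sequence. Then the exact sequence
\[
0 \longrightarrow \Z^n \xrightarrow{\;A_G\;} \Z^n \longrightarrow \operatorname{Cl}(U_\sigma) \longrightarrow 0
\]
identifies $\operatorname{Cl}(U_\sigma)$ with $\coker(A_G)$. Invoking the Smith normal form, one can write $A_G = U \Lambda V$ with $U, V \in \operatorname{GL}_n(\Z)$ and $\Lambda = \operatorname{diag}(d_1, \ldots, d_n)$ with $d_i \in \Z_{>0}$; then
\[
\coker(A_G) \;\cong\; \bigoplus_{i=1}^n \Z/d_i\Z,
\]
a finite abelian group of order $\prod_i d_i = |\det(\Lambda)| = |\det(A_G)|$, as desired.

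I do not anticipate a serious obstacle here; the only mildly technical point is bookkeeping the identification of $\phi$ with $A_G$ (row-vs-column convention, and the choice of the basis $D_{\rho_i}$), so I would take a sentence to pin that down explicitly. Finiteness of $\operatorname{Cl}(U_\sigma)$ is already guaranteed by Theorem \ref{thm: exact sequence}(1), but the Smith-normal-form argument above recovers it and produces the exact order $|\det(A_G)|$, which is the content of the corollary and matches the volume interpretation in Theorem \ref{thm: det bound 1}.
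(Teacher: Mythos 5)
Your proposal is correct and follows essentially the same route as the paper: both specialize the short exact sequence of Theorem \ref{thm: exact sequence}(2) to the simplicial full case, identify $\phi$ with the square matrix $A_G$, and compute the order of the cokernel via Smith normal form. You simply spell out the Smith normal form computation and the basis bookkeeping in more detail than the paper does.
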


\begin{proof} We maintain the notational conventions of the statement of Theorem \ref{thm: exact sequence}, along with those in the paragraph on simplicial toric varieties in our review above. By hypothesis, 
$G = \{u_\rho \colon \rho \in \Sigma(1)\}$ forms an $\R$-vector space basis of $\R^n$ since $\sigma$ is simplicial. So by Theorem \ref{thm: exact sequence}(2), $\operatorname{Cl}(U_\sigma)$ has rank zero, and hence is finite abelian. Moreover, as a special case of following the Smith normal form approach in the paragraph above, we see that
the matrix $A_G$ defines the action of $\phi$. Thus we conclude that $d$ is the order of $\operatorname{Cl}(U_\sigma)$. 
\end{proof}

\subsection{Rational double points}\label{subsection: rational singularities} 
To obtain additional explicit, effective multipliers, we turn to the case of complete, normal  Noetherian local domains $S$ in equal characteristic zero with du Val (ADE) isolated singularity and algebraically closed residue field; for simplicity, we work with $\C$. 
 In \cite[$\S$24]{Lip0}, Lipman computes the class group isomorphism type (as a $\Z$-module) of each du Val singularity. The du Val (ADE) singularities, also known as \textit{rational double points}, are the most basic isolated surface singularities. 
For a brief introduction to these singularities and their basic properties, we refer the reader to sections III.3 and III.7 of \cite{BHPV00}. Their minimal resolutions can be understood and classified by the simply-laced Dynkin diagrams of types A, D, and E. We can express $S$ as above as the quotient of the power series $\C[[x, y, z]]$ by a single local equation. We now situate a succinct data table, where the last column's entries are the optimal uniform multipliers from Remark \ref{rem:localgradedcyclic}:

 \begin{tble}\label{tble:duVal01}
 $$$$
 \resizebox{\columnwidth}{!}{%
 \begin{tabular}{ l || c | c | c }
     $\stackrel{\mbox{du Val Singularity}}{\mbox{type}}$ & Local Equation & $\stackrel{\mbox{Class group}}{\mbox{(isomorphism type)}}$ & $D_{\min} (S)$ \\ \hline
  $A_n$ ($n \ge 1$)  & $xz  - y^{n+1}$ & $\Z/(n+1)\Z$ & $n+1$\\ \hline
  $D_n$ ($n \ge 4$)  & $x^2 + y z^2 - z^{n-1}$ & $\begin{cases} (\Z/ 2 \Z)^2  & \mbox{$n$ even}\\  \Z/4 \Z & \mbox{$n$ odd}.
\end{cases}$ & $\begin{cases} 2  & \mbox{$n$ even}\\  4 & \mbox{$n$ odd}.
\end{cases}$
  \\ \hline 
  $E_n$ ($n = 6, 7, 8$)  & $\begin{cases} x^4 + y^3 + z^2 & \mbox{ if $n =6$}\\
  x^3y + y^3 + z^2 & \mbox{ if $n =7$}\\ 
  x^5 + y^3 + z^2 & \mbox{ if $n =8$}\\
  \end{cases}$ 
   & $\begin{cases} \Z/3 \Z & \mbox{ if $n =6$}\\
  \Z/2 \Z& \mbox{ if $n =7$}\\ 
  0 & \mbox{ if $n =8$}\\
  \end{cases}$ & $\begin{cases} 3 & \mbox{ if $n =6$}\\
  2 & \mbox{ if $n =7$}\\ 
  1 & \mbox{ if $n =8$}\\
  \end{cases}$
  \\ \hline 
  \end{tabular}%
  }
  \end{tble}
  Note that an analogous data table can be drafted for affine du Val singularity hypersurfaces in $\C^3$ (affine, $\N$-graded case), which arise as the quotients $\C^2/G$ by the action of a finite subgroup $G  \subseteq \mbox{SU}_2 (\C)$. 

Taking the above data table in tandem with Theorem \ref{thm: det bound 1}, we now have access to an infinite supply of concrete, effective uniform bounds that hold in \textbf{non-regular} Noetherian domains whose singularities are sufficiently nice (e.g., toric, du Val). In particular, the Huneke-Katz-Validashti result \cite{HKV} on uniform bounds now has some added company in the setting of rational surface singularities, courtesy of Lipman's work. 

\begin{rem}
When a Noetherian ring $R$ satisfies the uniform symbolic topology property (USTP) on prime ideals, experts might initially expect that the optimal multiplier $D = D_{\min}(R)$ should depend only on simple numerical invariants of $R$, such as Krull dimension, 
or the multiplicity of $R$ at an isolated singularity. However, $A_n$-singularities and $E_8$-singularities have multiplicity two, being rational double points. At one extreme, 
$D_{\min} (A_n) = n+1$ is optimal, grows arbitrarily large with $n$, and does \textbf{not} depend on any such numerical invariants of $A_n$. At the other extreme, $D_{\min} (E_8) = 1$ is lower than both the Krull dimension and the multiplicity. Therefore, a (sharp) uniform bound depending only on such numerical invariants need not exist.
\end{rem}

\section{Lingering Questions and Future Directions}\label{section: Finale} 

To summarize, we have deduced a group-theoretic criterion (Lemma \ref{thm: du Val bound 2}) for uniform symbolic  topologies on ideals of pure height one in a Noetherian normal domain. We also demonstrated the criterion's utility relative to familiar classes of local- or graded Cohen-Macaulay domains with rational singularities 
\cite{hoch0,hoch1}. We close with two natural lines for further investigation. 

\begin{enumerate}
\item Can Lemma  \ref{thm: du Val bound 2} be strengthened to cover all (non-prime) ideals of height one? 

\item Can we identify a candidate mechanism (e.g., group-theoretic) to verify the uniform symbolic topology property for: prime ideals of height one in non-simplicial toric rings; or prime ideals of height two or more, even in the case of simplicial toric rings 
of dimension at least three? 
\end{enumerate}


%

\end{document}